\newcommand{\im}{{\rm im}\:}
\newcommand{\Ga}{\mathrm{Gal}}
\newtheorem{thm}{Theorem}[section]
\newtheorem{lemma}[thm]{Lemma}
\newtheorem{prop}[thm]{Proposition}
\theoremstyle{definition}
\DeclareSymbolFont{cyrillic}{T2A}{cmr}{m}{n}
\DeclareMathSymbol{\Sha}{\mathalpha}{cyrillic}{216}
\begin{document}

\title[The finiteness of the Tate-Shafarevich group]{\bf The finiteness of the Tate-Shafarevich group over  function fields for algebraic tori defined over the base field}

\begin{abstract}
Let $K$ be a field and $V$ be a set of rank one valuations of $K$. The corresponding Tate-Shafarevich group of a $K$-torus $T$ is $\Sha(T , V) = \ker\left(H^1(K , T) \to \prod_{v \in V} H^1(K_v , T)\right)$. We prove that if $K = k(X)$ is the function field of a smooth geometrically integral quasi-projective variety over a field $k$ of characteristic 0 and $V$ is the set of discrete valuations of $K$ associated with prime divisors on $X$, then for any torus $T$ defined over the base field $k$, the group $\Sha(T , V)$ is finite in the following situations: (1) $k$ is finitely generated and $X(k) \neq \emptyset$; (2) $k$ is a number field.
\end{abstract}

\author[A.S.~Rapinchuk]{Andrei S. Rapinchuk}

\author[I.A.~Rapinchuk]{Igor A. Rapinchuk}

\address{Department of Mathematics, University of Virginia,
Charlottesville, VA 22904-4137, USA}

\email{asr3x@virginia.edu}

\address{Department of Mathematics, Michigan State University, East Lansing, MI 48824, USA}

\email{rapinchu@msu.edu}

\maketitle

\section{Introduction}\label{S-Intro}

Let $K$ be a field equipped with a set $V$ of rank one valuations. Given a $K$-torus $T$, one defines the corresponding Tate-Shafarevich group as follows:
$$
\Sha(T , V) := \ker\left(H^1(K , T) \to \prod_{v \in V} H^1(K_v , T) \right).
$$
A classical result states that if $K$ is a global field and $V$ consists of all valuations of $K$, then $\Sha(T , V)$ is finite. The standard proofs available in the literature deduce this fact from the Nakayama-Tate duality theorem from global class field theory (cf., for example, \cite[Proposition 6.9]{Pl-R} or \cite[\S 11.3]{Voskr}). However, it was pointed out in \cite[Remark 4.5]{RR2} that the finiteness can be easily derived from the finiteness of the class number and the finite generation of the unit group of $K$. In fact, this approach (which we briefly summarize below in \S2) applies in a much more general situation and, in particular, yields the finiteness of $\Sha(T , V)$ for any torus $T$ defined over an arbitrary finitely generated field $K$ when $V$ is a {\it divisorial} set of places, i.e. the set of discrete valuations of $K$ associated with prime divisors on a normal scheme $\mathfrak{X}$ of finite type over $\mathbb{Z}$ with function field $K$ (we will call such an $\mathfrak{X}$ a {\it model} of $K$). Furthermore, in \cite{RR1}, \cite{RR3}, we showed that if $k$ is a field of characteristic zero satisfying Serre's condition (F) (see \cite[Ch. III, \S4.1]{Serre-GC}) and $X$ is a geometrically integral normal $k$-defined variety, then for the function field $K = k(X)$ and the set $V$ of discrete valuations of $K$ corresponding to the prime divisors on $X$ (``{\it geometric} places''), the Tate-Shafarevich group $\Sha(T , V)$ is again finite for any $K$-defined torus $T$.

In a related direction, Harari and Szamuely \cite{HS} observed that if one considers only
tori $T$ defined over the base field $k$, then the finiteness of $\Sha(T , V)$ can be established for the function field $K = k(C)$ of a smooth geometrically integral $k$-defined curve $C$ when

\vskip1mm

\noindent (1) $k$ is finitely generated and $C(k) \neq \emptyset$; or

\vskip1mm

\noindent (2) $k$ is a number field

\vskip1mm

\noindent with respect to the set $V$ of geometric places of $K$ (as opposed to the divisorial sets that can be considered in this case).
The goal of this note is to show that a minor adaptation of the approach developed in \cite{RR1}-\cite{RR3}, in conjunction with some observations made in \cite{HS}, enables one to extend the result of \cite{HS} to function fields of varieties of arbitrary dimension. The precise statement is as follows.
\begin{thm}\label{T:1}
Let $X$ be a smooth geometrically integral variety over a  field $k$ of characteristic 0, and let $V$ be the set of discrete valuations of the function field $K = k(X)$ associated with codimension one points of $X$. Then for any $k$-defined torus $T$, the Tate-Shafarevich group $\Sha(T , V)$ is finite in the following situations:

\vskip2mm

{\rm (1)} $k$ is finitely generated and $X(k) \neq \emptyset$;

\vskip1mm

{\rm (2)} $k$ is a number field.
\end{thm}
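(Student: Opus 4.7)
The plan is to exploit the fact that $T$ is defined over $k$ by passing to a finite Galois extension $L/k$ with $\Gamma := \Ga(L/k)$ that splits $T$, and working with the base change $X_L := X \times_k L$. Since $X$ is geometrically integral, $L \otimes_k K$ is the field $L(X)$, and $L(X)/K$ is Galois with group $\Gamma$. Because $T$ is split by $L(X)$, inflation yields identifications
$$
H^1(K, T) \cong H^1(\Gamma, T(L(X))) \qquad \text{and} \qquad H^1(K_v, T) \cong H^1(\Gamma_w, T(L(X)_w))
$$
for any $v \in V$ and a chosen extension $w \mid v$ to $L(X)$, where $\Gamma_w \subseteq \Gamma$ is the decomposition group. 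Since $L/k$ is a constant extension, $L(X)_w/K_v$ is unramified for every $v \in V$, and $\Gamma_w$ is the Galois group of the induced extension of residue fields.

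Next, I would invoke the tautological divisor--Picard sequence of $\Gamma$-modules for the smooth integral $L$-variety $X_L$:
$$
1 \longrightarrow U(X_L) \longrightarrow L(X)^\times \longrightarrow \Div(X_L) \longrightarrow \Pic(X_L) \longrightarrow 0,
$$
where $U(X_L) := H^0(X_L, \cO_{X_L}^{\times})$, and tensor over $\Z$ with the $\Z$-free cocharacter lattice $X_\ast(T)$; the second term of the resulting exact sequence is $T(L(X))$. The key structural observation is that $\Div(X_L) = \bigoplus_{v \in V} \mathrm{Ind}^{\Gamma}_{\Gamma_w} \Z$, so Shapiro's lemma identifies $H^\ast(\Gamma, X_\ast(T) \otimes \Div(X_L))$ with $\prod_{v \in V} H^\ast(\Gamma_w, X_\ast(T))$, which is precisely the receiving group for the local conditions defining $\Sha(T, V)$.

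Tracking $\Sha(T, V)$ through the resulting long exact cohomology sequence will express it as a subquotient of pieces built from $H^\ast(\Gamma, X_\ast(T) \otimes U(X_L))$ and $H^\ast(\Gamma, X_\ast(T) \otimes \Pic(X_L))$. Finiteness then reduces to two inputs: (a) $U(X_L)/L^\times$ is a finitely generated free abelian group, which in characteristic $0$ follows by passing to a smooth compactification via Hironaka, noting that units on a smooth proper variety over $L$ are constants, and that the complement contributes only finitely many boundary divisors; and (b) $\Pic(X_L)$ is finitely generated modulo its maximal divisible subgroup. For (b) I would use the sequence $0 \to \Pic^0(X_L)(L) \to \Pic(X_L) \to \mathrm{NS}(X_L) \to 0$ for a smooth compactification: the N\'eron--Severi group is always finitely generated, and $\Pic^0(X_L)(L)$ is finitely generated by the Lang--N\'eron theorem in case (1) (with $X(k) \neq \emptyset$ ensuring a $k$-rational point on the compactification can be used to make the Picard scheme representable in the usual form) and by the Mordell--Weil theorem in case (2).

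The hard part will be controlling the contribution from the constants $L^\times \subseteq U(X_L)$, where the ``observations made in [HS]'' enter: the cohomology $H^1(\Gamma, X_\ast(T) \otimes L^\times) = H^1(k, T)$ is typically infinite, so one must show that the local conditions at geometric places cut out only a finite subgroup. The mechanism is that for a prime divisor $D \subseteq X$ with residue field $\kappa(v) = k(D)$, the map $H^1(k, T) \to H^1(K_v, T)$ lands in the unramified part $H^1_{\mathrm{ur}}(K_v, T) \cong H^1(\kappa(v), T)$, and $k$ is algebraically closed in $\kappa(v)$ since $X$ is geometrically integral. In case (1) one uses divisors passing through the rational point supplied by $X(k) \neq \emptyset$ to force vanishing of the constant part of any class in $\Sha(T, V)$; in case (2) one plays off many $v$'s together, invoking the arithmetic of number fields (Chebotarev-style density and the finiteness of unramified cohomology of $T$ over rings of $S$-integers in $k$) to show that simultaneous local triviality forces triviality in $H^1(k, T)$ modulo a finite subgroup.
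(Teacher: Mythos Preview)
Your approach is essentially the cohomological dual of the paper's adelic one, and the reduction steps match up almost term for term. The paper also fixes a finite Galois splitting field $\ell/k$, sets $L=\ell K$, and reduces (via an adelic version of Proposition~2.1) to showing that the contribution of the ``constant'' part $H^1(\ell/k,T)\subset H^1(L/K,T)$ to $\Sha(T,V)$ is finite; the finitely generated quotient you obtain from $U(X_L)/L^\times$ is exactly the paper's Lemma~3.1 on $E(T,V',L)/T(\ell)$, and your finite-generation of $\Pic(X_L)$ plays the role of the paper's shrinking of $V$ to $V'$ with trivial Picard group. So structurally you are doing the same thing, just packaging the local conditions through the divisor--Picard sequence rather than through adeles.

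Two comments on the differences. First, your handling of $\Pic(X_L)$ via compactification, N\'eron--Severi, and Lang--N\'eron/Mordell--Weil is more elaborate than needed: since in both cases $k$ (hence $L$) is finitely generated, one can simply quote the finite generation of the Picard group of a normal variety over a finitely generated field (Kahn) and then shrink $X$ so that $\Pic$ vanishes; the rational point plays no role here. Second, your treatment of the ``hard part'' in case~(2) is where the proposal is vaguest and where the paper is sharpest. The residue fields $\kappa(v)=k(D)$ are function fields, not completions of $k$, so ``Chebotarev-style density'' does not directly apply. The paper's device is to fix a \emph{single} smooth geometrically integral divisor $Y\subset X$ produced by Bertini, and then, for each finite place $u$ of $k$, use Lang--Weil and Hensel to get $Y(k_u)\neq\emptyset$; the existence of a $k_u$-point on $Y$ makes the base-changed map $H^1(\ell_{\bar u}/k_u,T)\to H^1((L^u)_{\bar v_u}/(K^u)_{v_u},T)$ injective (this is the higher-dimensional Lemma~3.5, which in turn relies on injectivity of $H^1$ of a torus over a smooth local ring into $H^1$ of its fraction field, i.e.\ a Grothendieck--Serre-type input). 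This pins $\Sha_0(T,V)$ inside the classical finite group $\Sha_k(T,U)$. Your sketch gestures at the right endpoint but omits this Bertini-plus-specialization mechanism, which is the actual content of the number-field case.
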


\vskip2mm

\noindent {\bf Remark 1.2.}

\vskip1mm

\noindent (i) \parbox[t]{16cm}{If $T$ is not assumed to be defined over the base field $k$, then, to the best of our knowledge, the question about the finiteness of $\Sha(T , V)$ in this situation remains open, even when $X$ is a curve.}

\vskip1mm

\noindent (ii) \parbox[t]{16cm}{For our purposes, we may (and will) assume that the variety $X$ is affine --- this can always be achieved by replacing $X$ with a suitable affine open $k$-subvariety, which will only shrink the corresponding set $V$ of valuations of $K = k(X)$. While the affineness assumption
is not essential for the argument, it will help us to avoid certain technicalities, while still yielding the required finiteness statement. By the same token, we can avoid the smoothness assumption, although then we need to require in item (1) of the theorem that $X$ has a {\it smooth} $k$-rational point.}

\vskip3mm

The paper is organized as follows. In \S\ref{S:Review}, we provide a short summary of our adelic approach to the finiteness of Tate-Shafarevich groups of tori over finitely generated fields with respect to divisorial sets of valuations. We then turn to the proof of Theorem \ref{T:1} in \S\ref{S:Proof}. Finally, in \S\ref{S:Extensions}, we make some brief remarks on higher Tate-Shafarevich groups.

\vskip2mm

\noindent {\bf Notation.} Suppose $T$ is a torus over a field $K.$ For a Galois extension $L/K$, we follow the usual practice (cf. \cite[Ch. III]{Serre-GC}) and denote by $H^i(L/K, T)$ the Galois cohomology group $H^i(\Ga(L/K), T(L))$; if $L = K^{\rm sep}$ is a separable closure of $K$, we denote the latter group simply by $H^i(K, T).$ For extra clarity, we will occasionally write $H^i(L/K, T(L)).$

\section{A brief review of adeles and the method from \cite{RR1}--\cite{RR3}}\label{S:Review}

We will now quickly recall some of the relevant terminology pertaining to adele groups and summarize several important points of our approach to the finiteness of Tate-Shafarevich groups with respect to divisorial sets, which we will also use below in the function field case --- we refer the reader to \cite[\S\S3-4]{RR2} for the full details.

So, suppose $K$ is a field equipped with a set $V$ of discrete valuations. As usual, one defines the ring of adeles $\mathbb{A}_K(V)$ as the restricted product of the completions $K_v$ for $v \in V$ with respect to the valuation rings $\mathcal{O}_v \subset K_v$. The group of invertible elements of $\mathbb{A}_K(V)$ is the group of ideles $\mathbb{I}_K(V)$. In the sequel, we will assume that $V$ satisfies the following condition (which holds automatically for the divisorial and geometric sets of places that we consider below):

\vskip2mm

\noindent $(*)$ \parbox[t]{12cm}{for any $a \in K^{\times}$, the set $V(a) = \{v \in V \ \vert \ v(a) \neq 0 \}$ is finite.}

\vskip2mm

\noindent We then have diagonal embeddings $K \hookrightarrow \mathbb{A}_K(V)$ and $K^{\times} \hookrightarrow \mathbb{I}_K(V)$. In particular, $\mathbb{A}_K(V)$ has a natural structure of a $K$-algebra.
Next, let
$$
\mathbb{A}_K^{\infty}(V) = \prod_{v \in V} \mathcal{O}_v \ \ \text{and} \ \ \mathbb{I}_K^{\infty}(V) = \prod_{v \in V} \mathcal{O}_v^{\times}
$$
be the subring of integral adeles and the subgroup of integral ideles, respectively. We recall that the quotient $\mathbb{I}_K(V)/ (\mathbb{I}_K^{\infty}(V) \cdot K^{\times})$ can be identified with the Picard group $\mathrm{Pic}(K , V)$, which is defined as the quotient of the group of divisors $\mathrm{Div}(K , V)$, i.e. the free abelian group on $V,$ by the subgroup of principal divisors $\mathrm{P}(K , V)$, where the principal divisor corresponding to $a \in K^{\times}$ is $\sum_{v \in V} v(a) \cdot v$. Furthermore, suppose $L/K$ is a finite Galois extension with Galois group $\mathscr{G} = \mathrm{Gal}(L/K)$, and let $\bar{V}$ be the set of all extensions of valuations from $V$ to $L$. We then have an isomorphism $\mathbb{A}_K(V) \otimes_K L \simeq \mathbb{A}_L(\bar{V})$ of topological rings. This, in particular, enables us to define an action of $\mathscr{G}$ on $\mathbb{A}_L(\bar{V})$ with the property that
$$
\mathbb{A}_L(\bar{V})^{\mathscr{G}} = \mathbb{A}_K(V).
$$

Next, let $T$ be a $K$-torus. Since $\mathbb{A}_K(V)$ is a $K$-algebra, we can consider the adelic group $T(\mathbb{A}_K(V)).$ For each $v \in V$, let $T(\mathcal{O}_v)$ be the unique maximal bounded subgroup of $T(K_v)$. (The notation is justified by the fact that this subgroup is indeed obtained as the group of $\mathcal{O}_v$-points of a suitable $\mathcal{O}_v$-model $\mathcal{T}$ of $T \times_K K_v$.)
%(We note that if $T$ has good reduction at $v$ (which, in view of condition $(*)$, is the case for all but finitely many $v \in V$), then $T(\mathcal{O}_v)$ coincides with the group of $\mathcal{O}_v$-points of an $\mathcal{O}_v$-model $\mathcal{T}$ of $T$.)
One can show that $T(\mathbb{A}_K(V))$ is simply the restricted product of the $T(K_v)$ for $v \in V$ with respect to the subgroups $T(\mathcal{O}_v)$ (see, for example, \cite[\S11.1]{Voskr} for the number field case). We define the subgroup of integral adeles to be
$$
T(\mathbb{A}_K^{\infty}(V)) = \prod_{v \in V} T(\mathcal{O}_v).
$$
Furthermore, the diagonal embedding $K \hookrightarrow \mathbb{A}_K(V)$ yields an embedding $T(K) \hookrightarrow T(\mathbb{A}_K(V))$, which is also often referred to as the diagonal embedding. For a field extension $L/K$, the corresponding groups $T(\mathbb{A}_L(\bar{V}))$ and $T(\mathbb{A}_L^{\infty}(\bar{V}))$ are defined analogously. Moreover, if, as above, $L/K$ is a finite Galois extension with Galois group $\mathscr{G}$, then the action of $\mathscr{G}$ on $\mathbb{A}_L(\bar{V})$ naturally induces a $\mathscr{G}$-action on $T(\mathbb{A}_L(\bar{V})).$ In this case, the diagonal embedding gives rise to the group homomorphism
$$
\lambda_{L/K} \colon H^1(L/K, T) \to H^1(L/K, T(\mathbb{A}_L(\bar{V}))).
$$
Also, we set $E(T, V, L) = T(L) \cap T(\mathbb{A}_L^{\infty}(\bar{V})).$ With these preliminaries, we have the following statement.
\begin{prop}\label{P:1}
Let $T$ be a $K$-torus that splits over a finite Galois extension $L/K$. Then

\vskip2mm

\noindent {\rm (i)}  We have $\Sha(T , V) = \ker \lambda_{L/K}$.

\vskip2mm

\noindent {\rm (ii)} \parbox[t]{15cm}{If $T(\mathbb{A}_L(\bar{V})) = T(\mathbb{A}_L^{\infty}(\bar{V})) \cdot T(L),$ then $\Sha(T , V)$ is contained in
the image of the map $\nu \colon H^1(L/K , E(T, V, L)) \to H^1(L/K , T)$.}
\end{prop}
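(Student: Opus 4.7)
The plan is to prove both parts via standard manipulations in Galois cohomology: inflation--restriction plus Shapiro's lemma for part (i), followed by a diagram chase in the long exact sequence attached to a suitable short exact sequence of $\mathscr{G}$-modules for part (ii). For part (i), I would first translate everything in sight into $\mathscr{G}$-cohomology. Because $T$ splits over $L$, Hilbert 90 gives $H^1(L, T) = 0$, and inflation--restriction produces an isomorphism $H^1(L/K, T) \simeq H^1(K, T)$; the same reasoning applied locally at each $v \in V$ and $w \in \bar V$ with $w \vert v$ gives $H^1(L_w/K_v, T) \simeq H^1(K_v, T)$. The main step is then to decompose $T(\mathbb{A}_L(\bar V))$ as a $\mathscr{G}$-module according to the $\mathscr{G}$-orbits in $\bar V$ (one per $v \in V$) and apply Shapiro's lemma to each orbit, with stabilizer the decomposition group $\mathscr{G}_w = \mathrm{Gal}(L_w/K_v)$. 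This produces an identification
$$H^1(L/K, T(\mathbb{A}_L(\bar V))) \;\simeq\; \bigoplus_{v \in V} H^1(K_v, T),$$
under which $\lambda_{L/K}$ becomes the product of the localization maps $H^1(K, T) \to H^1(K_v, T)$ (by naturality, since both sides originate from the embedding $T(L) \hookrightarrow T(\mathbb{A}_L(\bar V))$). Its kernel is $\Sha(T, V)$ by definition.

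For part (ii), the hypothesis $T(\mathbb{A}_L(\bar V)) = T(\mathbb{A}_L^{\infty}(\bar V)) \cdot T(L)$ is exactly what is needed to make the multiplication map surjective in the short exact sequence of $\mathscr{G}$-modules
$$1 \to E(T, V, L) \xrightarrow{\Delta} T(\mathbb{A}_L^{\infty}(\bar V)) \times T(L) \xrightarrow{\mu} T(\mathbb{A}_L(\bar V)) \to 1,$$
where $\Delta(e) = (e, e)$ and $\mu(a, t) = a \cdot t^{-1}$; the kernel of $\mu$ consists of pairs $(e, e)$ with $e \in T(L) \cap T(\mathbb{A}_L^{\infty}(\bar V)) = E(T, V, L)$. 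Taking $\mathscr{G}$-cohomology yields the piece
$$H^1(L/K, E(T, V, L)) \xrightarrow{\Delta_*} H^1(L/K, T(\mathbb{A}_L^{\infty}(\bar V))) \oplus H^1(L/K, T) \xrightarrow{\mu_*} H^1(L/K, T(\mathbb{A}_L(\bar V))).$$
Given $\xi \in \Sha(T, V) = \ker \lambda_{L/K}$ (by part (i)), the pair $(0, \xi)$ lies in $\ker \mu_*$, since $\mu_*(0, \xi) = -\lambda_{L/K}(\xi) = 0$. Lifting to $\eta \in H^1(L/K, E(T, V, L))$ with $\Delta_*(\eta) = (0, \xi)$ and projecting onto the second coordinate gives $\nu(\eta) = \xi$, so $\xi \in \mathrm{im}(\nu)$.

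The step I expect to require the most care is the Shapiro identification in part (i), because it involves commuting Galois cohomology with a \emph{restricted} (rather than unrestricted) product. The standard reduction rests on the fact that for almost every $w \in \bar V$, the torus $T$ has good reduction at $w$, so the integral contribution $H^1(\mathscr{G}_w, T(\mathcal{O}_w))$ vanishes and the local contributions outside a finite set are trivial. Condition $(*)$ ensures this holds for cofinitely many valuations and thereby identifies the restricted product with a direct sum at the $H^1$ level. The remainder of the argument is a diagram chase that I do not expect to present any difficulty.
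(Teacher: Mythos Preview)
Your argument for part (ii) is correct and is essentially the paper's explicit cocycle computation repackaged via the long exact sequence; nothing to add there.

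For part (i), your overall strategy (Hilbert 90, inflation--restriction, Shapiro) matches the paper's, but the step you yourself flag as delicate contains a genuine error. The assertion that ``the integral contribution $H^1(\mathscr{G}_w, T(\mathcal{O}_w))$ vanishes'' for almost all $w$ is false in this generality. Good reduction of $T$ does not force this: for unramified $v$ one has a $\mathscr{G}_w$-equivariant reduction map $T(\mathcal{O}_w)\to T(\kappa_w)$ to the residue field, and $H^1(\mathscr{G}_w,T(\mathcal{O}_w))$ surjects onto $H^1(\kappa_w/\kappa_v,T)$, which is typically nonzero (here $\kappa_v$ is a function field, not a finite field, so no Lang-type vanishing applies). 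Consequently your identification $H^1(L/K,T(\mathbb{A}_L(\bar V)))\simeq\bigoplus_v H^1(K_v,T)$ fails.

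What the paper actually uses is the weaker fact that for unramified $v$ the map
\[
\iota_v\colon H^1(L_{\bar v}/K_v,T(\mathcal{O}_{L_{\bar v}}))\longrightarrow H^1(L_{\bar v}/K_v,T(L_{\bar v}))
\]
is \emph{injective} (this follows from the $\mathscr{G}_w$-equivariant splitting $T(L_{\bar v})\cong T(\mathcal{O}_{L_{\bar v}})\times X_*(T)$ given by a uniformizer of $K_v$). One does not try to compute $H^1(L/K,T(\mathbb{A}_L(\bar V)))$ globally. The inclusion $\ker\lambda_{L/K}\subset\Sha(T,V)$ is immediate by projecting an adelic coboundary to each place. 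For the reverse inclusion, take $\xi\in\Sha(T,V)$: the finitely many values $\xi_\sigma\in T(L)$ lie in $T(\mathcal{O}_{\bar v})$ for all but finitely many $v$, so $\xi$ defines a class in $H^1(\mathscr{G}_{\bar v},T(\mathcal{O}_{\bar v}))$ there; since its image under $\iota_v$ is trivial and $\iota_v$ is injective, one may choose an \emph{integral} trivializer $a_{\bar v}\in T(\mathcal{O}_{\bar v})$ at almost every place. Propagating to the other $w\mid v$ via $a_{\sigma\bar v}:=\sigma(a_{\bar v})\xi_\sigma^{-1}$ yields an honest element of the restricted product trivializing $\xi$ adelically. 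That is the missing mechanism; condition $(*)$ alone does not supply it.
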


\begin{proof}
These facts were basically established in \cite[\S 4]{RR2}, so we will just give a sketch of the argument. Let us define
$$
\Sha(L/K, T, V) = \ker \left(H^1(L/K, T) \to \prod_{v \in V} H^1(L_{\bar{v}}/K_v, T) \right),
$$
where, in the product on the right, we choose, for each $v \in V$, a {\it single} extension $\bar{v} \vert v$ in $\bar{V}$. First, it follows from Hilbert's Theorem 90 and the inflation-restriction exact sequence that $\Sha(T, V) = \Sha(L/K, T, V)$. Second, one easily shows (cf. the argument on pp. 244--245 in \cite{RR1}) that if the extension $L_{\bar{v}}/K_v$ is unramified, then the map
$$
\iota_v \colon H^1(L_{\bar{v}}/K_v , T(\mathcal{O}_{L_{\bar{v}}})) \longrightarrow H^1(L_{\bar{v}}/K_v , T(L_{\bar{v}}))
$$
is injective. (We  note that the injectivity of $\iota_v$ can also be seen in the more general framework of results on the Grothendieck-Serre conjecture over discrete valuation rings --- cf. \cite{Guo}, \cite{Nisn2}.) These two facts together, combined with Shapiro's Lemma, yield (i) (cf. {\it loc. cit.} for the details).

Next, suppose that an element $\xi \in H^1(L/K , T)$, represented by a cocycle $\{ \xi_{\sigma}\}$ ($\sigma \in \Ga(L/K)$), lies in $\ker \lambda_{L/K}$. Then there exists $a \in T(\mathbb{A}_L(\bar{V}))$ such that
$$
\xi_{\sigma} = \sigma(a) \cdot a^{-1} \ \ \text{for all} \ \ \sigma \in \Ga(L/K).
$$
The assumption made in (ii) implies that one can write $ a = b \cdot c$, with $b \in T(\mathbb{A}_L^{\infty}(\bar{V}))$ and $c \in T(L)$. Then for any $\sigma \in \Ga(L/K)$, we have
$$
\varepsilon_{\sigma} := \xi_{\sigma} \cdot (\sigma(c) c^{-1})^{-1} = \sigma(b) b^{-1} \in E(T, V, L).
$$
Thus, $\{\epsilon_{\sigma}\}$ is a cocycle on $\Ga(L/K)$ with values in $E(T, V, L)$, and the previous equation shows that for the corresponding cohomology class $\varepsilon \in H^1(L/K, E(T, V, L))$, we have $\xi = \nu(\varepsilon)$, proving (ii). (We note that this argument is a simplified version of the proof of Proposition 4.4 in \cite{RR2} as due to the assumption made in the statement of (ii) above, the group $H$ in {\it loc. cit.} coincides with $T(\mathbb{A}(L , V^L))$ in the notations used there.)

\end{proof}

Suppose now that $K$ is a finitely generated field and $V$ is a divisorial set of places of $K$ associated with a model $\mathfrak{X}$. Let $T$ be a
$K$-torus,  $L/K$ be a finite Galois extension that splits $T$, and $\bar{V}$ be the set of all extensions of valuations from $V$ to $L$. Then the Picard group $\mathrm{Pic}(L , \bar{V})$ coincides with the Picard group of the normalization $\tilde{\mathfrak{X}}$ of $\mathfrak{X}$ in $L$, hence is finitely generated (see \cite[Corollaire 1]{Kahn}). Consequently, there exists a subset $V' \subset V$ with finite complement such that $\mathrm{Pic}(L , \bar{V'})$ is trivial, and therefore $\mathbb{I}_L(\bar{V'}) = \mathbb{I}_L^{\infty}(\bar{V'}) \cdot L^{\times}$ (see, for example, \cite[Proposition 3.1]{RR2}). Since $T$ is split over $L$, the latter implies that $T(\mathbb{A}_L(\bar{V'})) = T(\mathbb{A}_L^{\infty}(\bar{V'})) \cdot T(L)$.
Then according to Proposition \ref{P:1}(ii), the group $\Sha(T , V')$ (hence also $\Sha(T , V)$) is contained in the image of the map $\nu' \colon H^1(L/K , E(T, V', L)) \to H^1(L/K , T)$. But in this situation, we show that the group $E(T,V',L)$ is finitely generated (see the proof of \cite[Proposition 3.2]{RR2}), and therefore $H^1(L/K , E(T, V', L))$ is finite. In summary, we obtain the following finiteness result.

\begin{prop}\label{P:ShaFG} {\rm (\cite[Theorem 1.2]{RR2})}
Let $K$ be a finitely generated field and $V$ be a divisorial set of places of $K$ associated to a model $\mathfrak{X}$. Then for any algebraic $K$-torus $T$, the Tate-Shafarevich group
$$
\Sha(T,V) = \ker \left( H^1(K,T) \to \prod_{v \in V} H^1(K_v, T) \right)
$$
is finite.
\end{prop}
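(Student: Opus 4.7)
The plan is to reduce the finiteness question to one involving a finite Galois extension $L/K$ that splits $T$, and then to apply Proposition \ref{P:1} after discarding a controlled finite set of valuations. Let $\mathscr{G} = \Ga(L/K)$ and let $\bar{V}$ denote the set of extensions of the valuations in $V$ to $L$. By Proposition \ref{P:1}(i), $\Sha(T, V) = \ker \lambda_{L/K}$, so the problem becomes entirely one of finite Galois cohomology of $\mathscr{G}$.

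To put ourselves in a position to apply Proposition \ref{P:1}(ii), I would shrink $V$ to a subset $V' \subset V$ with finite complement on which $T(\mathbb{A}_L(\bar{V'})) = T(\mathbb{A}_L^{\infty}(\bar{V'})) \cdot T(L)$. Since $T$ is split over $L$, this adelic decomposition follows from the idelic identity $\mathbb{I}_L(\bar{V'}) = \mathbb{I}_L^{\infty}(\bar{V'}) \cdot L^{\times}$, which in turn is equivalent to the triviality of $\Pic(L, \bar{V'})$. The crucial input here is that $\Pic(L, \bar{V})$ is finitely generated --- it coincides with the Picard group of the normalization $\tilde{\mathfrak{X}}$ of the model $\mathfrak{X}$ in $L$, and the latter is known to be finitely generated for models of finitely generated fields (a result of Kahn). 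Removing from $V$ the finitely many valuations underlying a set of generators of $\Pic(L, \bar{V})$ yields the desired $V'$, and then $T(\mathbb{A}_L(\bar{V'}))$ admits the required factorization because cocharacters translate the idelic decomposition into the toric one.

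With this in place, Proposition \ref{P:1}(ii) gives $\Sha(T, V') \subset \im \nu'$, where $\nu' \colon H^1(L/K, E(T, V', L)) \to H^1(L/K, T)$. It therefore suffices to prove that $H^1(L/K, E(T, V', L))$ is finite, and since $L/K$ is a finite extension, this reduces in turn to showing that $E(T, V', L) = T(L) \cap T(\mathbb{A}_L^{\infty}(\bar{V'}))$ is finitely generated as an abelian group. This is the step I expect to be the main obstacle: it is an analogue of Dirichlet's unit theorem in the divisorial setting. Since $T$ splits over $L$, however, it reduces via the cocharacter lattice of $T$ to the finite generation of the group of $\bar{V'}$-units of $L$, which holds in the divisorial setup over a finitely generated base. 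Finally, since $V \setminus V'$ is finite, $\Sha(T, V)$ is contained in $\Sha(T, V')$ up to the kernel of a map into a finite product of local cohomology groups, each of which is easily controlled, so the finiteness of $\Sha(T, V')$ delivers the finiteness of $\Sha(T, V)$.
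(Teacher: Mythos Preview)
Your proposal is correct and follows essentially the same route as the paper: pass to a splitting field $L$, use finite generation of $\Pic(L,\bar{V})$ (via the normalization $\tilde{\mathfrak{X}}$ and Kahn's result) to shrink to $V'$ with trivial Picard group, apply Proposition~\ref{P:1}(ii), and conclude via the finite generation of $E(T,V',L)$, which reduces to a unit-theorem statement since $T$ splits over $L$. One minor simplification: your final step is easier than you suggest, since $V' \subset V$ gives the containment $\Sha(T,V) \subset \Sha(T,V')$ outright, with no extra local analysis needed.
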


\section{Tate-Shafarevich groups over function fields}\label{S:Proof}

In this section, we will discuss how minor adaptations of the method reviewed above, together with several additional observations (some of which were already present in \cite{HS}) lead to a proof of Theorem \ref{T:1}.

Our set-up is as follows. Suppose $X$ is a smooth geometrically integral affine variety over a field $k$. Let $K = k(X)$ be the corresponding function field and $V$ be the set of discrete valuations of $K$ associated with the prime divisors on $X$. Also, let $T$ be a torus defined over the base field $k$ that splits over a finite Galois extension $\ell/k$ with $\mathscr{G} = \Ga(\ell/k)$. Set $L = \ell \cdot K$ and let $\bar{V}$ be the set of all extensions of the valuations from $V$ to $L$.

Now, let us assume in addition that $k$ is a finitely generated field. Then, as above, $\mathrm{Pic}(L , \bar{V})$ is finitely generated (see \cite[Corollaire 2]{Kahn} or \cite[Proposition 6.1]{GJRW}), so there exists a subset $V' \subset V$ with finite complement such that $\mathbb{I}_L(\bar{V'}) = \mathbb{I}_L^{\infty}(\bar{V'}) \cdot L^{\times}$, and hence also $T(\mathbb{A}_L(\bar{V'})) = T(\mathbb{A}_L^{\infty}(\bar{V'})) \cdot T(L)$. Note that we may assume that $V'$ is the set of discrete valuations of $K$ associated with the prime divisors on an open affine subvariety $X' \subset X$ (see, for example, the discussion in \cite[\S5.3]{RR-Survey}). From
%it follows from \cite[Corollaire 2]{Kahn} (see also \cite[Proposition 6.1]{GJRW}) that the Picard group $\mathrm{Pic}(L , \bar{V})$ is finitely generated. Consequently, there exists a subset $V' \subset V$ with finite complement such that $\mathrm{Pic}(L , \bar{V'})$ is trivial (see, for example, \cite[Proposition 3.1]{RR2}), and therefore, by the discussion in \S\ref{S:Review}, we have $\mathbb{I}_L(\bar{V'}) = \mathbb{I}_L^{\infty}(\bar{V'}) \cdot L^{\times}$. According to
Proposition \ref{P:1}(ii), we conclude that $\Sha(T,V')$ (hence also $\Sha(T,V)$) is contained in the image of the map
$$
\nu' \colon H^1(L/K , E(T, V', L)) \to H^1(L/K , T),
$$
where, as before, $E(T,V', L) = T(L) \cap T(\mathbb{A}_L^{\infty}(\bar{V'})).$
However, in contrast to the case of divisorial sets of valuations reviewed above, the group $E(T, V', L)$ does not have to be finitely generated. Nevertheless, we have the following. For the statement, we note that we have an inclusion $T(\ell) \subset E(T, V', L).$
\begin{lemma}\label{L:FG}
With notations as above, the quotient $E(T, V', L)/T(\ell)$ is a finitely generated abelian group.
\end{lemma}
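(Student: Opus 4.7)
The plan is to use the splitting $T \times_k \ell \simeq \mathbb{G}_{m,\ell}^n$ to reduce the finite generation of $E(T,V',L)/T(\ell)$ to a classical statement about units of an affine variety modulo its constant field.

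First, I would fix an $\ell$-isomorphism $\phi \colon T \times_k \ell \xrightarrow{\sim} \mathbb{G}_{m,\ell}^n$ and let $\phi_L$ denote its base change to $L$. Then $\phi_L$ identifies $T(L)$ with $(L^\times)^n$ and $T(\ell) \subset T(L)$ with $(\ell^\times)^n$. Moreover, because $T \times_k L_v \simeq \mathbb{G}_{m,L_v}^n$ is split for every $v \in \bar{V'}$, the maximal bounded subgroup $T(\mathcal{O}_v)$ corresponds under $\phi_L$ to $(\mathcal{O}_v^\times)^n$. Consequently,
$$
T(\mathbb{A}_L^{\infty}(\bar{V'})) \;\simeq\; (\mathbb{I}_L^{\infty}(\bar{V'}))^n \quad \text{and hence} \quad E(T,V',L) \;\simeq\; \bigl(L^\times \cap \mathbb{I}_L^{\infty}(\bar{V'})\bigr)^n.
$$

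Next, I would identify the right-hand side geometrically. Since $X'$ is smooth and geometrically integral over $k$ and $\ell/k$ is a finite separable extension (characteristic zero), the base change $Y' := X' \times_k \ell$ is a smooth, hence normal, geometrically integral affine $\ell$-variety with function field $L$, whose codimension-one points realize precisely the valuations in $\bar{V'}$. By normality, $\mathcal{O}(Y') = \bigcap_{v \in \bar{V'}} \mathcal{O}_v$ inside $L$, so
$$
L^\times \cap \mathbb{I}_L^{\infty}(\bar{V'}) \;=\; \{\, a \in L^\times \mid v(a) = 0 \text{ for all } v \in \bar{V'}\,\} \;=\; \mathcal{O}(Y')^\times.
$$
Moreover, each $v \in \bar{V'}$ extends a valuation in $V'$ which is trivial on $k$, so $v$ is trivial on the algebraic extension $\ell/k$; this places $\ell^\times$ inside $\mathcal{O}(Y')^\times$ and confirms the inclusion $T(\ell) \subset E(T,V',L)$. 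Combining these identifications yields
$$
E(T,V',L)/T(\ell) \;\simeq\; \bigl(\mathcal{O}(Y')^\times/\ell^\times\bigr)^n.
$$

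Finally, it suffices to show that $\mathcal{O}(Y')^\times/\ell^\times$ is finitely generated. This is a classical consequence of a theorem of Rosenlicht: over $\bar\ell$, the group $\mathcal{O}(Y' \times_\ell \bar\ell)^\times/\bar\ell^\times$ is a finitely generated free abelian group (of rank bounded by the number of irreducible boundary components in any normal compactification of $Y' \times_\ell \bar\ell$). Since $\ell$ is algebraically closed in $L = \ell(Y')$ by the geometric integrality of $Y'$ over $\ell$, the natural map $\mathcal{O}(Y')^\times/\ell^\times \hookrightarrow \mathcal{O}(Y' \times_\ell \bar\ell)^\times/\bar\ell^\times$ is injective, so its source is a subgroup of a finitely generated abelian group and hence finitely generated. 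The only real conceptual step is the initial reduction using the splitting of $T$ over $\ell$; after this, the geometric identification with the units of $\mathcal{O}(X' \times_k \ell)$ and the invocation of Rosenlicht's theorem are essentially automatic, so there is no significant obstacle.
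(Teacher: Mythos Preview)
Your proof is correct and follows essentially the same strategy as the paper: both reduce to $T=\mathbb{G}_m$, identify $E(\mathbb{G}_m,V',L)$ with the unit group $U(L,\bar{V'})$ of the normal affine $\ell$-variety $X'\times_k\ell$ (which coincides with the paper's normalization $\tilde{X'}$), and then show that $U(L,\bar{V'})/\ell^\times$ is finitely generated. The only difference is in this last step: you invoke Rosenlicht's theorem over $\bar\ell$ and descend via geometric integrality, whereas the paper performs the underlying compactification argument directly over $\ell$, embedding $U(L,\bar{V'})/\ell^\times$ into $\mathbb{Z}^r$ via the valuations at the finitely many boundary divisors of a normal projective completion of $\tilde{X'}$.
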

\begin{proof}
Since, by assumption, $T$ splits over $\ell$, it suffices to consider the case of a 1-dimensional $\ell$-split torus $T = \mathbb{G}_m$. Then $E(T, V', L)$ coincides with
$$
U(L , \bar{V'}) = \{ a \in L^{\times} \ \vert \ v(a) = 0 \ \ \text{for all} \ \ v \in \bar{V'} \}.
$$
Let $\tilde{X'}$ be the normalization of $X'$ in $L$. Since $\tilde{X'}$ is affine, we can embed it as a dense open subset into a projective $\ell$-variety $Y$ (of course, this fits into the general framework of Nagata's compactification theorem --- see \cite{ConNag} and \cite{Lut} for proofs written in scheme-theoretic language). Furthermore, since $\tilde{X'}$ is normal, and it is well-known that the normalization of a projective variety is projective (see, for example, \cite[Tag 0BXQ, Lemma 33.27.2]{Stacks}), we can assume that $Y$ is normal as well. We have $\ell(Y) = \ell(\tilde{X'}) = L$, and we denote by $V''$ the set of discrete valuations of $L$ associated with prime divisors on $Y$. Then $V'' \setminus \bar{V'}$ consists of finitely many valuations, say, $v_1, \ldots , v_r$ (which correspond to the prime divisors on $Y$ not contained in $\tilde{X'}$). Consider the group homomorphism
$$
\tau \colon U(L , \bar{V'}) \to \mathbb{Z}^r, \ \ \ \ \tau(a) = (v_1(a), \cdots, v_r(a)).
$$
Since $Y$ is normal and complete, any function $a \in L$ that satisfies $v(a) \geq 0$ for all $v \in V''$ belongs to $\ell$ (see, for example, \cite[Theorem 6.45]{GW} or \cite[Proposition 6.3A]{Hartshorne}). It follows that
$$\ker \tau = \{ a \in L^{\times} \ \vert \ v(a) = 0 \ \ \text{for all} \ \ v \in V''\}$$ reduces to $\ell^{\times}$. So, being isomorphic to a subgroup of $\mathbb{Z}^r$, the quotient $U(L, \bar{V'})/\ell^{\times}$ is a finitely generated group, as required.
\end{proof}

Thus, we have a short exact sequence of modules over $\mathscr{G} = \mathrm{Gal}(\ell/k) = \mathrm{Gal}(L/K)$:
$$
1 \to T(\ell) \longrightarrow E(T, V', L) \longrightarrow \Gamma \to 1,
$$
where $\Gamma$ is finitely generated as an abelian group. We then have the following exact sequence of cohomology groups
$$
H^1(\ell/k , T) \stackrel{\delta}{\longrightarrow} H^1(L/K , E(T, V', L)) \longrightarrow H^1(L/K , \Gamma).
$$
Since the group $H^1(L/K , \Gamma)$ is finite (see, for example, \cite[Ch. II, Corollary 1.32]{MilneCFT}), we see that the intersection $\Sha(T , V) \cap (\im (\nu' \circ \delta))$ has finite index in $\Sha(T , V)$. We now define
$$
\Sha_0(T , V) = \ker\left(H^1(\ell/k , T) \longrightarrow \prod_{v \in V} H^1(L_{\bar{v}}/K_{v} , T)   \right), %\ \ \bar{v} \vert v.
$$
where, as in the proof of Proposition \ref{P:1}, we fix a {\it single} extension $\bar{v} \vert v$ in $\bar{V}.$
Clearly, we have $\Sha(T , V) \cap (\im (\nu' \circ \delta)) = (\nu' \circ \delta)(\Sha_0(T , V))$. So, we obtain the following.

\begin{prop}\label{P:2}
With notations as above, if $\Sha_0(T , V)$ is finite, then $\Sha(T , V)$ is also finite.
\end{prop}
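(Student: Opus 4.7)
The proof is essentially the assembly of the pieces set up in the paragraph immediately preceding the statement, so my plan is to bring them together into a clean implication. The two key ingredients already in place are: (a) the short exact sequence of $\mathscr{G}$-modules
$$
1 \to T(\ell) \to E(T,V',L) \to \Gamma \to 1
$$
with $\Gamma$ a finitely generated abelian group; and (b) the inclusion $\Sha(T,V) \subseteq \Sha(T,V') \subseteq \im(\nu')$, which follows from $V \setminus V'$ being finite and from Proposition \ref{P:1}(ii) applied to $V'$ (valid since $V'$ was chosen so that $T(\mathbb{A}_L(\bar{V'})) = T(\mathbb{A}_L^{\infty}(\bar{V'})) \cdot T(L)$).

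From (a), I extract the long exact cohomology sequence and use the identification $H^1(L/K, T(\ell)) = H^1(\ell/k, T)$ (valid because $\mathscr{G} = \Gal(L/K) = \Gal(\ell/k)$ acts on $T(\ell)\subseteq T(L)$ via a single compatible action) to obtain
$$
H^1(\ell/k, T) \xrightarrow{\delta} H^1(L/K, E(T,V',L)) \to H^1(L/K, \Gamma).
$$
Since $\mathscr{G}$ is finite and $\Gamma$ is finitely generated, $H^1(L/K, \Gamma)$ is finite, hence $\im(\delta)$ has finite index in $H^1(L/K, E(T,V',L))$ and accordingly $\im(\nu' \circ \delta)$ has finite index in $\im(\nu')$. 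Combined with (b), this shows that $\Sha(T,V) \cap \im(\nu' \circ \delta)$ has finite index in $\Sha(T,V)$, reducing the task to proving the finiteness of this intersection.

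The substantive step will be establishing the equality
$$
\Sha(T,V) \cap \im(\nu' \circ \delta) = (\nu' \circ \delta)(\Sha_0(T,V)),
$$
which is a functoriality check. The composition $\nu' \circ \delta$ is the map induced on cohomology by the $\mathscr{G}$-equivariant inclusion $T(\ell) \hookrightarrow T(L)$. Using the identification $\Sha(T,V) = \Sha(L/K, T, V)$ from the proof of Proposition \ref{P:1}(i), membership of $\nu'(\delta(\xi))$ in $\Sha(T,V)$ amounts to triviality of its image in each $H^1(L_{\bar{v}}/K_v, T(L_{\bar{v}}))$. Since this localization factors through the restriction to the decomposition group $\Gal(L_{\bar{v}}/K_v) \subseteq \mathscr{G}$ followed by the inclusion $T(\ell) \hookrightarrow T(L_{\bar{v}})$, the triviality condition on $\nu'(\delta(\xi))$ translates directly into $\xi \in \Sha_0(T,V)$, giving the claimed equality.

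With the equality in hand the conclusion is immediate: if $\Sha_0(T,V)$ is finite, then so is $(\nu' \circ \delta)(\Sha_0(T,V)) = \Sha(T,V) \cap \im(\nu' \circ \delta)$, which is a finite-index subgroup of $\Sha(T,V)$, forcing $\Sha(T,V)$ itself to be finite. The only nontrivial part is the functoriality identification in the third paragraph; every other ingredient is formal given the setup.
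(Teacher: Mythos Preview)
Your proposal is correct and follows exactly the paper's own argument, which is contained in the paragraph immediately preceding the proposition: the short exact sequence, the finiteness of $H^1(L/K,\Gamma)$, the finite-index inclusion, and the identity $\Sha(T,V)\cap\im(\nu'\circ\delta)=(\nu'\circ\delta)(\Sha_0(T,V))$ are all the same ingredients assembled in the same order. Your third paragraph simply unpacks the paper's ``Clearly'' by observing that $(\nu'\circ\delta)^{-1}(\Sha(T,V))=\Sha_0(T,V)$ via the commutativity of localization with the inclusion $T(\ell)\hookrightarrow T(L)$, which is exactly what the paper leaves implicit.
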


Thus, in order to prove Theorem \ref{T:1}, it is enough to show in the situations described in the items (1) and (2), the group $\Sha_0(T , V)$ is finite. For this, we will consider two cases.

\vskip4mm

\noindent \underline{{\bf The case $\dim X = 1$.}} This case was studied by Harari and Szamuely in \cite{HS}, and we will now review the relevant parts of their argument to put the general case in the appropriate context.

First, we have the following observation.

\begin{lemma}\label{L:1}
Let $X$ be a smooth geometrically integral curve over an arbitrary field $k$. Suppose $x \in X(k)$ is a $k$-rational point and let $v$ be the discrete valuation of $K = k(X)$ associated with $x$. Then for any $k$-torus $T$ that splits over a finite Galois extension $\ell/k$, the map
$$
\mu_v \colon H^1(\ell/k , T) \longrightarrow H^1(L_{\bar{v}}/K_v , T), \ \ \ \text{where} \ \ L = \ell \cdot K \ \ \text{and} \ \ \bar{v} \vert v,
$$
is injective.
\end{lemma}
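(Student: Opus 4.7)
The plan is to realize $\mu_v$ as a composition of two injective maps: one coming from a retraction on integral points (afforded by the $k$-rational point $x$), the other being the injectivity of the integral-to-full cohomology comparison map in the unramified setting, which was already recorded in the proof of Proposition \ref{P:1}.

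First I would describe $K_v$ and $L_{\bar{v}}$ concretely. Because $x$ is a smooth $k$-rational point of a curve, the local ring $\cO_v$ is a DVR with residue field $k$, and a choice of uniformizer $\pi$ identifies $K_v$ with $k((\pi))$. Since the algebraic closure of $k$ inside $k((\pi))$ is $k$ itself (a one-line Hensel argument), $\ell$ and $K_v$ are linearly disjoint over $k$, so $L \otimes_K K_v = \ell \otimes_k K_v$ is a field, which is necessarily $L_{\bar{v}}$. The elements of $\ell$ are units in $\cO_{L_{\bar{v}}}$, so $L_{\bar{v}}/K_v$ is unramified with residue field $\ell$; in particular $\Ga(L_{\bar{v}}/K_v)$ is canonically identified with $\mathscr{G} = \Ga(\ell/k)$, and $\cO_{L_{\bar{v}}} \cong \ell[[\pi]]$.

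Next I would exploit the retraction. The constant inclusion $\ell \hookrightarrow \cO_{L_{\bar{v}}}$ and the residue map $\cO_{L_{\bar{v}}} \twoheadrightarrow \ell$ are both $\mathscr{G}$-equivariant: the $\mathscr{G}$-action on $\cO_{L_{\bar{v}}} \cong \ell[[\pi]]$ fixes $\pi$ and acts on coefficients via the Galois action on $\ell$, so it preserves both the subring of constants and the maximal ideal. Applying the functor $T$ yields a $\mathscr{G}$-equivariant retraction $T(\cO_{L_{\bar{v}}}) \twoheadrightarrow T(\ell)$ of the inclusion $T(\ell) \hookrightarrow T(\cO_{L_{\bar{v}}})$. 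Passing to Galois cohomology, the induced map
$$H^1(\mathscr{G}, T(\ell)) \longrightarrow H^1(\mathscr{G}, T(\cO_{L_{\bar{v}}}))$$
is therefore split injective.

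Finally, since $L_{\bar{v}}/K_v$ is unramified, the map $\iota_v \colon H^1(L_{\bar{v}}/K_v, T(\cO_{L_{\bar{v}}})) \to H^1(L_{\bar{v}}/K_v, T(L_{\bar{v}}))$ is injective by the fact recalled in the proof of Proposition \ref{P:1}(i). Composing this with the injection produced above identifies $\mu_v$ with an injection. I do not anticipate any real obstacle in this argument: it reduces to two essentially formal observations, and what it really uses about the curve setting is only that the residue field at a smooth $k$-rational point of $X$ equals $k$. The same proof visibly fails in higher dimension, where the residue field of a prime divisor on $X$ is a function field over $k$ rather than $k$ itself --- which is why the general case in Theorem \ref{T:1} will require additional input beyond Lemma \ref{L:1}.
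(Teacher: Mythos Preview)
Your proposal is correct and follows essentially the same approach as the paper: you factor $\mu_v$ through $H^1(\mathscr{G}, T(\cO_{L_{\bar{v}}}))$, observe that the first map is split injective via the reduction-mod-$\mathfrak{P}_{L_{\bar{v}}}$ retraction (since the residue field at $x$ is $k$), and then invoke the unramified injectivity of $\iota_v$ from Proposition~\ref{P:1}. The paper's proof is the same diagram chase, only stated more tersely and without your explicit power-series identification of $K_v$ and $L_{\bar{v}}$.
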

\begin{proof}
Let us consider the following diagram
\begin{equation}\label{E:30}
\xymatrix{H^1(\ell/k , T) \ar[r]^(0.40){\mu'_v} \ar@{-->}[rd]_{{\rm id}} &  H^1(L_{\bar{v}}/K_v , T(\mathcal{O}_{L_{\bar{v}}})) \ar[d]^{\rho_v} \ar[r]^{\mu''_v} & H^1(L_{\bar{v}}/K_v , T(L_{\bar{v}})) \\ & H^1(\ell/k, T) &}
%\begin{array}{ccccc}
%H^1(\ell/k , T) & \stackrel{\mu'_v}{\longrightarrow} & H^1(L_{\bar{v}}/K_v , T(\mathcal{O}_{L_{\bar{v}}})) & \stackrel{\mu''_v}{\longrightarrow} & H^1(L_{\bar{v}}/K_v , T) \\
% & \mathrm{id} \searrow & \downarrow \rho_v & & \\
% & & H^1(\ell/k , T) & &
%\end{array}
\end{equation}
where the top row represents a natural factorization of $\mu_v$, and $\rho_v$ is induced by the reduction map modulo the valuation ideal $\mathfrak{P}_{L_{\bar{v}}}$
of the valuation ring $\mathcal{O}_{L_{\bar{v}}}$ (we note that the residue fields of $L_{\bar{v}}$ and $K_v$ are $\ell$ and $k$, respectively). Since the extension $L_{\bar{v}}/K_v$ is unramified, the map $\mu''_v$ is injective (see the proof of Proposition \ref{P:1}),
%the argument given in the proof of Proposition \ref{P:1}(i) shows that $\mu''_v$ is injective,
so $\ker \mu_v = \ker \mu'_v$. On the other hand, the composition $\rho_v \circ \mu'_v$ is the identity map, and hence $\ker \mu'_v$ is trivial.
\end{proof}

It follows immediately from the lemma that if $X(k) \neq \emptyset$, then $\Sha_0(T , V)$ is trivial. Now suppose that $k$ is a number field and denote by $V^k_f$ the set of all finite places of $k$. By the Lang-Weil estimates \cite{LW} and Hensel's lemma, we can find a subset $U \subset V^k_f$ with finite complement $V^k_f \setminus U$ such that for any $u \in U$, there exists a point $x \in X$ whose residue field $k(x)$ is contained in the completion $k_u$ (cf. \cite[Lemma 2.1]{HS}). Furthermore, it is well-known (and follows, for example, from Proposition \ref{P:ShaFG}) that
the group
$$
\Sha_k(T , U) := \ker\left( H^1(k , T) \longrightarrow \prod_{u \in U} H^1(k_u , T)\right)
$$
is finite. Thus, to prove the finiteness of $\Sha_0(T , V)$, it is enough to show that there is an inclusion
\begin{equation}\label{E:5}
\Sha_0(T , V) \subset \Sha_k(T , U).
\end{equation}
For this, fix $u \in U$, pick $x \in X$ such that $k(x) \subset k_u$, and let $v$ be the discrete valuation of $K = k(X)$ corresponding to $x$. Set $X_u = X \times_k k_u$. Then $x$ lifts to a point $x_u \in X_u(k_u)$, and we denote by $v_u$ the discrete valuation of $K^u = k_u(X_u)$ associated with $x_u$. We then have the following commutative diagram
$$
\xymatrix{H^1(\ell/k , T) \ar[r]^{\mu_v} \ar[d]_{\lambda_u} & H^1(L_{\bar{v}}/K_v , T) \ar[d]^{\Lambda_u} \\ H^1(\ell_{\bar{u}}/k_u , T) \ar[r]^(.40){\mu_{v_u}} & H^1((L^u)_{\bar{v}_u}/(K^u)_{v_u} , T)}
$$
where $L^u = \ell \cdot K^u$ and  $\bar{v}_u \vert v_u$. Suppose now that $\xi \in \Sha_0(T , V)$. Then
$$
\Lambda_u(\mu_v(\xi)) = 1 = \mu_{v_u}(\lambda_u(\xi)).
$$
Since $x_u \in X(k_u)$, the map $\mu_{v_u}$ is injective by Lemma \ref{L:1}, and therefore $\lambda_u(\xi) = 1$.
As $u \in U$ was arbitrary, we obtain (\ref{E:5}), which completes the consideration of the case where $\dim X = 1$.

\vskip4mm

\noindent \underline{{\bf The case $\dim X \geq 2$.}} We begin the argument with the following upgrade of Lemma \ref{L:1}.
\begin{lemma}\label{L:2}
Let $X$ be a smooth geometrically integral variety over an arbitrary field $k$, let $Y \subset X$ be a smooth geometrically integral $k$-defined subvariety of codimension 1, and let $v$ be the discrete valuation of the function field $K = k(X)$ associated with $Y$. If $Y(k) \neq \emptyset$, then for any $k$-torus $T$ that splits over finite Galois extension $\ell/k$, the map
$$
\mu_v \colon H^1(\ell/k , T) \longrightarrow H^1(L_{\bar{v}}/K_v , T),  \ \ \ \text{where} \ \ L = \ell \cdot K \ \ \text{and} \ \ \bar{v} \vert v,
$$
is injective.
\end{lemma}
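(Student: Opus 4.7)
\textbf{Proof plan for Lemma~\ref{L:2}.} The plan is to argue by induction on $\dim X$, adapting the strategy of Lemma~\ref{L:1}, which supplies the base case $\dim X = 1$ (where $Y$ is forced to be the single $k$-rational point). For the inductive step, I would set up the analogue of diagram~(\ref{E:30}):
$$
\xymatrix{H^1(\ell/k, T) \ar[r]^(0.40){\mu'_v} \ar@{-->}[rd]_{\lambda_Y} & H^1(L_{\bar{v}}/K_v, T(\mathcal{O}_{L_{\bar{v}}})) \ar[d]^{\rho_v} \ar[r]^{\mu''_v} & H^1(L_{\bar{v}}/K_v, T(L_{\bar{v}})) \\ & H^1(\ell(Y)/k(Y), T) &}
$$
The key observation is that, because $L/K = \ell(X)/k(X)$ is a constant field extension, it is unramified at $v$; the residue fields of $K_v$ and $L_{\bar{v}}$ are $k(Y)$ and $\ell(Y)$, respectively, and $\Ga(\ell(Y)/k(Y))$ is canonically identified with $\mathscr{G} = \Ga(\ell/k)$. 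Under this identification, $\rho_v \circ \mu'_v$ becomes the natural map $\lambda_Y \colon H^1(\ell/k, T) \to H^1(\ell(Y)/k(Y), T)$ induced by the inclusion $\ell \hookrightarrow \ell(Y)$, while $\mu''_v$ remains injective by unramifiedness (as in Lemma~\ref{L:1}). Consequently, $\ker \mu_v \subseteq \ker \lambda_Y$, and the problem reduces to proving that $\lambda_Y$ is injective.

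To establish the injectivity of $\lambda_Y$, I would combine the hypothesis $Y(k) \neq \emptyset$ with the inductive hypothesis. Fix $y \in Y(k)$ and, if necessary, shrink $X$ to an affine open neighborhood of $y$ (harmless in view of Remark~1.2(ii)). A Bertini-type argument then yields a smooth geometrically integral $k$-defined codimension-one subvariety $Y' \subset Y$ containing $y$, so that $Y'(k) \neq \emptyset$. Let $v'$ be the discrete valuation of $k(Y)$ associated with $Y'$. Since $\dim Y < \dim X$, the inductive hypothesis, applied to the pair $(Y, Y')$, gives the injectivity of
$$
\mu_{v'} \colon H^1(\ell/k, T) \longrightarrow H^1(\ell(Y)_{\bar{v}'}/k(Y)_{v'}, T).
$$
But $\mu_{v'}$ factors as $\lambda_Y$ followed by the natural localization map $H^1(\ell(Y)/k(Y), T) \to H^1(\ell(Y)_{\bar{v}'}/k(Y)_{v'}, T)$, which forces $\lambda_Y$ to be injective and completes the induction.

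The main technical point is constructing the subvariety $Y'$ through the prescribed $k$-point $y$ with $Y'$ smooth and geometrically integral of codimension one in $Y$. Since Theorem~\ref{T:1} is formulated in characteristic zero (so that $k$ is infinite), this can be produced by embedding a suitable quasi-projective neighborhood of $y$ in $Y$ into some projective space and invoking Bertini's theorems on smoothness and geometric irreducibility for the linear subsystem of hyperplane sections passing through $y$; the case $\dim Y = 1$ is degenerate and handled by simply taking $Y' = \{y\}$.
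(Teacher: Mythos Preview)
Your argument is correct, but it takes a genuinely different route from the paper's. Both proofs begin the same way: factor $\mu_v$ through $H^1(L_{\bar v}/K_v, T(\mathcal{O}_{L_{\bar v}}))$, use unramifiedness to reduce to the injectivity of the ``reduction'' map $\lambda_Y \colon H^1(\ell/k, T) \to H^1(\ell(Y)/k(Y), T)$. At this point the arguments diverge. The paper proves the injectivity of $\lambda_Y$ \emph{directly}, by passing through the local ring $\mathcal{O}_{Y_\ell,y}$ at the chosen rational point $y$: one factors $\lambda_Y$ as $H^1(\ell/k,T) \to H^1(\mathscr{G}, T(\mathcal{O}_{Y_\ell,y})) \to H^1(\ell(Y)/k(Y),T)$, observes that the first map has a retraction given by reduction modulo the maximal ideal, and invokes the Grothendieck--Serre-type injectivity $H^1(\mathscr{G}, T(\mathcal{O}_{Y_\ell,y})) \hookrightarrow H^1(\ell(Y)/k(Y),T)$ for tori over a regular local ring containing a field (Colliot-Th\'el\`ene--Sansuc \cite[Theorem 4.1]{CT-S}, or Fedorov--Panin \cite{FP}). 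Your approach instead trades this cohomological input for geometry: you run an induction on $\dim X$, using Bertini through the point $y$ to produce $Y'\subset Y$ and reduce to the statement for $(Y,Y')$, with Lemma~\ref{L:1} anchoring the base case.

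The trade-off is clear. The paper's proof works over an \emph{arbitrary} base field $k$, as the lemma is stated; your induction needs Bertini through a prescribed point, hence at minimum an infinite $k$ (you restrict to characteristic~0, which is all that Theorem~\ref{T:1} requires). In exchange, your argument avoids the Grothendieck--Serre machinery over higher-dimensional regular local rings entirely --- the only injectivity you use is the unramified DVR case already present in Lemma~\ref{L:1} and Proposition~\ref{P:1} --- and it reuses the very same Bertini statement (Proposition~\ref{P:3}) that the paper needs anyway in the proof of Theorem~\ref{T:1}. So for the application at hand your route is more self-contained, at the cost of not proving Lemma~\ref{L:2} in its stated generality.
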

\begin{proof}
We start with a diagram similar to (\ref{E:30}):
$$
\xymatrix{H^1(\ell/k , T) \ar[r]^(0.40){\mu'_v} \ar@{-->}[rd]_{\nu_v} &  H^1(L_{\bar{v}}/K_v , T(\mathcal{O}_{L_{\bar{v}}})) \ar[d]^{\rho_v} \ar[r]^{\mu''_v} & H^1(L_{\bar{v}}/K_v , T(L_{\bar{v}})) \\ & H^1(L^{(\bar{v})}/K^{(v)}, T) &}
$$
where $L^{(\bar{v})}$ and $K^{(v)}$ denote the corresponding residue fields. Again, $\mu''_v$ is injective, so we conclude that $\ker \mu_v$ is contained in the kernel of $\nu_v = \rho_v \circ \mu'_v$. In the case at hand, we have $K^{(v)} = k(Y)$ and $L^{(\bar{v})} = \ell(Y_{\ell})$, where $Y_{\ell} = Y \times_k \ell$. Thus, we need to prove the injectivity of
$$
\nu_Y \colon H^1(\ell/k , T) \longrightarrow H^1(\ell(Y_{\ell})/k(Y) , T).
$$
Fix $y \in Y(k) \subset Y_{\ell}(\ell)$ and consider the local rings $\mathcal{O}_{Y , y}$ and $\mathcal{O}_{Y_{\ell} , y} = \ell \cdot \mathcal{O}_{Y , y}$, with maximal ideals $\mathfrak{m}$ and $\mathfrak{m}_{\ell}$. The residue fields $\mathcal{O}_{Y , y}/\mathfrak{m}$ and $\mathcal{O}_{Y_{\ell} , y}/\mathfrak{m}_{\ell}$ coincide with $k$ and $\ell$, respectively, and we have another diagram analogous to (\ref{E:30}):
$$
\xymatrix{H^1(\ell/k , T) \ar[r]^(0.38){\nu'_Y} \ar@{-->}[rd]_{{\rm id}} & H^1(\ell(Y_{\ell})/k(Y) , T(\mathcal{O}_{Y_{\ell} , y}))  \ar[d]^{\rho_{Y_{\ell} , y}} \ar[r]^(.59){\nu''_Y} & H^1(\ell(Y_{\ell})/k(Y) , T) \\ & H^1(\ell/k, T) &}
$$
where again the top row is a factorization of $\nu_Y$ and $\rho_{Y_{\ell} , y}$ is induced by the residue map modulo $\mathfrak{m}_{\ell}$. Since $Y$ is smooth, by \cite[Theorem 4.1]{CT-S} (or also by \cite[Theorem 1]{FP}), the map $\nu''_Y$ is injective. But the composition $\rho_{Y_{\ell} , y} \circ \nu'_Y$ is the identity map, so $\nu'_Y$ is injective, and therefore $\nu_Y$ is injective, as required.
\end{proof}

We will use this statement in conjunction with the following consequence of Bertini-type theorems.

\begin{prop}\label{P:3}
Let $X$ be a smooth geometrically integral quasi-projective variety of dimension $\geq 2$ over a field $k$ of characteristic 0. Then $X$ contains a smooth geometrically integral $k$-defined subvariety $Y$ of codimension $1$. Moreover, given $x \in X(k)$, one can choose such a $Y$ so that it contains $x$.
\end{prop}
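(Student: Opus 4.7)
The plan is to realize $Y$ as a hyperplane section of $X$ under a suitable projective embedding, using that $\mathrm{char}\, k = 0$ (hence $k$ is infinite) to realize the ``generic'' members of Bertini-type constructions rationally over $k$. Since $X$ is quasi-projective, fix a locally closed immersion $X \hookrightarrow \mathbb{P}^N_k$, and let $(\mathbb{P}^N_k)^{\vee}$ denote the dual projective space parametrizing hyperplanes.

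For the first assertion, apply the classical Bertini theorem (for instance, Jouanolou, \emph{Th\'eor\`emes de Bertini et applications}, Th\'eor\`eme~I.6.10), which provides a non-empty Zariski open subset $U \subset (\mathbb{P}^N_k)^{\vee}$ such that for every $H \in U$, the intersection $Y_H := X \cap H$ is smooth, geometrically integral, and of pure codimension $1$ in $X$; here $\dim X \geq 2$ together with $\mathrm{char}\, k = 0$ is what guarantees geometric irreducibility. Any $H \in U(k)$ then yields the required $Y = Y_H$.

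For the second assertion, let $\Lambda_x \subset (\mathbb{P}^N_k)^{\vee}$ be the $k$-defined sub-linear-system of hyperplanes through $x$, a projective subspace of dimension $N - 1$. It suffices to produce a non-empty open $U' \subset \Lambda_x$ such that $Y_H$ is smooth and geometrically integral for every $H \in U'(k)$. After replacing the embedding by a high-degree Veronese re-embedding, one may assume that the linear system of hyperplane sections through $x$ is base-point-free on $X \setminus \{x\}$ and separates tangent vectors at $x$. Blowing up $\pi \colon \widetilde X \to X$ at $x$ with exceptional divisor $E \cong \mathbb{P}^{d-1}_k$ (where $d = \dim X$), the proper transforms of the members of $\Lambda_x$ form a base-point-free linear system on the smooth quasi-projective variety $\widetilde X$, and a second application of Bertini on $\widetilde X$ produces a generic $H \in \Lambda_x$ for which $\widetilde Y_H$ is smooth, geometrically integral, and meets $E$ transversally. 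The image $Y := \pi(\widetilde Y_H) = Y_H$ is then smooth at $x$ (by transversality with $E$) and elsewhere, and remains geometrically integral. Alternatively, one may invoke the Kleiman-Altman theorem (\emph{Bertini theorems for hypersurface sections containing a subscheme}, Compositio Math.\ 1979) directly, with base subscheme $\{x\}$.

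The principal subtlety is ensuring that geometric \emph{irreducibility} of $Y_H$ persists under the base-point constraint at $x$: the smoothness conditions are straightforwardly open, but irreducibility in the presence of a prescribed base locus requires either the blow-up reduction sketched above or the Kleiman-Altman result, and in both cases the hypothesis $\dim X \geq 2$ is essential. Once geometric irreducibility is secured, smoothness of $Y$ at $x$ reduces to the open condition on $\Lambda_x$ that the hyperplane not contain the embedded tangent space $T_x X \subset T_x \mathbb{P}^N_k$, which is obviously non-empty.
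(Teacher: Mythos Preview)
Your argument is correct and aligns with the paper's approach: the paper does not give an independent proof but simply invokes Bertini-type theorems, citing Ghosh--Krishna \cite{GK} and the very Kleiman--Altman result \cite{KA} you mention as your alternative. You have essentially supplied the details behind that citation, including the standard blow-up reduction for the base-point constraint at $x$ and the observation that $k$ infinite guarantees $k$-rational hyperplanes in the Bertini-open locus.
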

This follows immediately from \cite[Theorem 3.6]{GK} (see also \cite{KA} for various related results).

\vskip3mm

We can now complete the proof of Theorem \ref{T:1}. First, suppose $X$ contains a $k$-rational point $x$. According to Proposition \ref{P:3}, there exists a geometrically integral smooth $k$-defined subvariety $Y \subset X$ of codimension 1 such that $Y(k) \neq \emptyset$. Let $v$ be the discrete valuation of $K = k(X)$ associated with $Y$. Then according to Lemma \ref{L:2}, the map $\mu_v \colon H^1(\ell/k , T) \longrightarrow H^1(L_{\bar{v}}/K_v , T)$ is injective, hence $\Sha_0(T , V)$ is trivial.

Now assume that $k$ is a number field. By Proposition \ref{P:3}, we can find a geometrically integral smooth $k$-defined subvariety $Y \subset X$ of codimension $1$. Let again $v$ be the discrete valuation of $K = k(X)$ associated with $Y$. It follows from the Lang-Weil estimates (cf. \cite{LW}) and Hensel's lemma that there exists a subset $U \subset V^K_f$ with finite complement $V^K_f \setminus U$ such that $Y(k_u) \neq \emptyset$ for all $u \in U$. We claim that
\begin{equation}\label{E:10}
\Sha_0(T , V) \subset \Sha_k(T , U) := \ker\left( H^1(\ell/k , T) \longrightarrow \prod_{u \in U}  H^1(\ell_{\bar{u}}/k_u , T) \right).
\end{equation}
Since again $\Sha_k(T , U)$ is finite, this will yield the desired finiteness of $\Sha_0(T , V)$.

Fix $u \in U$. Set $X_u = X \times_k k_u$ and $Y_u = Y \times_k k_u$, and let $v_u$ denote the discrete valuation of $K^u := k_u(X_u)$ associated with $Y_u$. We then have the following commutative diagram
$$
\xymatrix{H^1(\ell/k , T) \ar[r]^{\mu_v} \ar[d]_{\lambda_u} & H^1(L_{\bar{v}}/K_v , T) \ar[d]^{\Lambda_u} \\ H^1(\ell_{\bar{u}}/k_u , T) \ar[r]^(.40){\mu_{v_u}} & H^1((L^u)_{\bar{v}_u}/(K^u)_{v_u} , T)}
$$
where $L^u = \ell \cdot K^u$ and  $\bar{v}_u \vert v_u$. Suppose now that $\xi \in \Sha_0(T , V)$. Then
$$
\Lambda_u(\mu_v(\xi)) = 1 = \mu_{v_u}(\lambda_u(\xi)).
$$
Since $Y(k_u) \neq \emptyset$, by Lemma \ref{L:2}, the map $\mu_{v_u}$ is injective, and therefore $\lambda_u(\xi) = 1$. Since $u \in U$ was arbitrary, this establishes the inclusion (\ref{E:10}) and completes the proof of Theorem \ref{T:1}.

\section{Extensions}\label{S:Extensions}

So far, we have worked only with Tate-Shafarevich groups in degree 1. However, one can also consider higher Tate-Shafarevich groups. More precisely, suppose $K$ is a field equipped with a set $V$ of rank one valuations. Then for a $K$-torus $T$, a (finite) Galois extension $L/K$, and any $i \geq 1$ one can consider the $i$th Tate-Shafarevich group
$$
\Sha^i(L/K, T, V) := \ker\left(H^i(L/K , T) \longrightarrow \prod_{v \in V} H^i(L_{\bar{v}}/K_v , T)    \right),
$$
where as before, in the product on the right, we choose a {\it single} extension $\bar{v}$ of $v$ to $L$.

We showed in \cite[Proposition 4.2]{RR2} that if $K$ is a finitely generated field and $V$ is a divisorial set of places, then $\Sha^i(L/K , T, V)$ is finite for any $K$-torus $T$, any finite Galois extension $L/K$, and {\it all} $i \geq 1$. Furthermore, in \cite{IR}, the second-named author modified the techniques used in the proof of this result to establish the finiteness of $\Sha^i(L/K, T, V)$ for all $i \geq 2$ in the case where $K = k(X)$ is the function field of a smooth geometrically integral variety $X$ defined over a field $k$ of type (F) in the sense of Serre, $V$ consists of the discrete valuations of $K$ associated with prime divisors on $X$, and $T$ is an arbitrary $K$-torus (for $i = 1$ this result was obtained earlier in \cite{RR2}, \cite{RR3}).

We will now briefly discuss results of this kind in the set-up considered in Theorem \ref{T:1}. Thus, we take $K = k(X)$ to be a function field of a smooth geometrically integral variety $X$ over a finitely generated field $k$, let $V$ be the set of discrete valuations associated with the divisors on $X$, but now we assume that $T$ is defined over the base field $k$. Let $\ell/k$ be a finite Galois extension and set $L = \ell \cdot K$. First, it follows from \cite[\S 4]{RR2} that the assertion of Proposition \ref{P:1} remains valid for $\Sha^i(L/K, T, V)$ for any $i > 1$, so, in view of Lemma \ref{L:FG}, the question about the finiteness of $\Sha^i(L/K, T, V)$ reduces to the finiteness of
$$
\Sha^i_0(\ell/k, T, V) := \ker\left( H^i(\ell/k , T) \longrightarrow \prod_{v \in V} H^i(L_{\bar{v}}/K_v , T)  \right).
$$

Let us start with the case where $\dim X = 1.$ We note that for any $v \in V$, the extension $L_{\bar{v}}/K_v$ is unramified, and hence the map
$H^i(L_{\bar{v}}/K_v , T(\mathcal{O}_{L_{\bar{v}}})) \to H^i(L_{\bar{v}}/K_v , T(L_{\bar{v}}))$ is injective for all $i \geq 1$ (cf. \cite[proof of Lemma 4.3]{RR2}). Then the argument used in the proof of Lemma \ref{L:1} shows that if $v$ corresponds to a point $x \in X(k)$, the map $H^i(\ell/k , T) \to H^i(L_{\bar{v}}/K_v , T)$ is injective. Consequently, if $X(k) \neq \emptyset$, the group $\Sha_0^i(L/K, T, V)$ is trivial.

Continuing with this case, suppose now that $k$ is a number field. As in the proof of Theorem \ref{T:1}, one can pick a subset $U \subset V^k_f$ with finite complement such that for any $u \in U$, there exists a point $x_u \in X$ whose residue field $k(x_u)$ is contained in $k_u$. Then one shows that $\Sha_0^i(L/K, T, V)$ is contained in
$$
\Sha_k^i(\ell/k, T, U) := \ker\left(H^i(\ell/k , T) \longrightarrow \prod_{u \in U} H^i(\ell_{\bar{u}}/k_u , T)  \right)
$$
Since $\Sha_k^i(\ell/k, T, U)$ is finite (for example, by \cite[Proposition 4.2]{RR2}; we note that it follows from the Poitou-Tate theorems that $H^i(\ell/k , T)$ is in fact finite for $i \geq 3$ --- see \cite[Ch. II, \S6]{Serre-GC}), we conclude that $\Sha_0^i(L/K, T, V)$ is finite as well. Thus, in the case where $\dim X = 1$, the group $\Sha^i(L/K , T, U)$ is finite if either $X(k) \neq \emptyset$ or $k$ is a number field, for all $i \geq 1$.

To conclude our discussion of this case, let us remark that one can construct a conic $X$ (= the Severi-Brauer variety for a quaternion algebra) over a finitely generated field $k$ such that for a suitable bi-quadratic extension $\ell/k$, the kernel of the map $H^3(\ell/k , T) \to H^3(L/K , T)$ is infinite for the 1-dimensional split torus $T = \mathbb{G}_m$. Thus, without the additional assumptions that we made in the preceding statements, the group $\Sha_0^3(L/K, T, V)$ may be infinite. This renders our approach inapplicable. To the best of our knowledge, the question about the finiteness of $\Sha^3(L/K, T, V)$ in this situation remains open.

\vskip2mm

Now suppose that $\dim X \geq 2.$ Recall that the proof of Lemma \ref{L:2} relied on the fact that if $Y \subset X$ is a geometrically integral smooth subvariety of codimension 1, then for a closed point $y \in Y(k)$ the map
$$
H^i(\ell(Y)/k(Y), T(\mathcal{O}_{Y, y})) \longrightarrow H^i(\ell(Y)/k(Y) , T)
$$
is injective for $i = 1$. But in fact this result also remains valid for $i = 2$ (see \cite[Theorem 4.3]{CT-S}), so the above argument yields the finiteness of $\Sha^2(L/K, T, V)$ if $k$ is finitely generated and $X(k) \neq \emptyset$ or if $k$ is a number field. On the other hand, as we have already noted, in the number field case, the groups $H^i(\ell/k , T)$ are finite whenever $i \geq 3$, and thus the groups $\Sha^i(L/K, T, V)$ are in fact finite for all $i \geq 1$ whenever $k$ is a number field.

\vskip2mm

\noindent {\small {\bf Acknowledgements.} We would like to thank Igor Dolgachev for useful correspondence concerning the Bertini theorems and the anonymous referee for valuable suggestions. The second-named author was partially supported by NSF grant DMS-2154408.}

\end{document}